\newtheorem{Thm}{Theorem}[section]
\newtheorem{Lem}[Thm]{Lemma}
\def\blfootnote{\xdef\@thefnmark{}\@footnotetext}
\theoremstyle{definition}
\newtheorem{Def}[Thm]{Definition}
\theoremstyle{remark}
\newtheorem{Rem}[Thm]{\bf{Remark}}
\newcommand{\ConvD}{\overset{d}{\rightarrow}}
\newcommand{\E}{\mathbb{E}}
\newcommand{\mathbd}{\boldsymbol}
\title{The universality of homogeneous polynomial forms\\ and critical limits}
\author{Shuyang Bai \qquad Murad S. Taqqu
}
\begin{document}
\maketitle
\begin{abstract}
 \citet{nourdin:peccati:2010:invariance} established the following universality result: if a sequence of off-diagonal homogeneous polynomial forms in i.i.d.\ standard normal random variables converges in distribution to a normal, then the  convergence also holds if one replaces these i.i.d.\ standard normal random variables in the polynomial forms by any independent standardized random variables with uniformly bounded third absolute moment. The  result, which was stated for polynomial forms with a finite number of terms, can be extended to allow an infinite number of terms in the polynomial forms. Based on a contraction criterion derived from this extended universality result, we prove a central limit theorem for a strongly dependent nonlinear processes, whose memory parameter lies at the boundary between short and long memory.
 \end{abstract}
\blfootnote{
\begin{flushleft}
\textbf{Key words} Universality,  Wiener chaos, long memory, long-range dependence
\end{flushleft}
\textbf{2010 AMS Classification:} 60F05\\
}
\section{Introduction}
In \citet{nourdin:peccati:2010:invariance}, a universality result was established for the following off-diagonal homogeneous polynomial form
\begin{equation}\label{eq:finite form}
Q_k(N_n,f_n,\mathbf{X}):=\sum_{1\le i_1,\ldots,i_k\le N_n} f_n(i_1,\ldots,i_k) X_{i_1}\ldots X_{i_k},
\end{equation}
where $f_n$ is a sequence of symmetric functions on $\mathbb{Z}_+^k$ \emph{vanishing on the diagonals} ($f_n(i_1,\ldots,i_k)=0$ if $i_p=i_q$ for some $p\neq q$), and $\mathbf{X}=(X_1,X_2,\ldots)$ is a sequence of standardized independent  random variables, and $N_n$ is a finite sequence such that $N_n\rightarrow\infty$ as $n\rightarrow\infty$.

The universality result says that if $\mathbf{Z}=(Z_1,Z_2,\ldots)$ is an i.i.d.~standard normal sequence and $Q_k(N_n,f_n,\mathbf{Z})$ converges weakly to a normal distribution as $n\rightarrow\infty$, then the same weak convergence to normal holds if $\mathbf{Z}$ is replaced by $\mathbf{X}$, where $\mathbf{X}$ is any  standardized independent  sequence with some uniform higher moment bound.

It is natural  to try to eliminate the finiteness of $N_n$  in the preceding result. This extension was mentioned in Remark 1.13 of \citet{nourdin:peccati:2010:invariance}, but was not explicitly done. One would encounter a number of difficulties if one were to extend the method of proof used for finite $N_n$ to $N_n=\infty$. We will note, however, that this extension can be easily achieved using a simple approximation argument. We find it valuable to have such an extension and the corresponding contraction criterion (Theorem \ref{Thm:practical}) since it can be directly applied to limit theorems in the context of long memory.

We consider such an application in Section \ref{Sec:boundary} where we suppose that
\[
f_N(i_1,\ldots,i_k)=\frac{1}{A(N)}\sum_{n=1}^Na(n-i_1,\ldots,n-i_k)1_{\{-\infty<i_1<n,\ldots,-\infty<i_k<n\}},
\]
and where the function $a(\cdot)$ behaves essentially like a homogeneous function with exponent $\alpha$. The resulting polynomial form $Q_k(f_N)$ is then the partial sum of a stationary process. The exponent $\alpha$ is chosen in such a way that the corresponding stationary process lives on the boundary between short and long memory. We use the contraction criterion to prove that a central limit theorem holds but with the nonstandard normalization $\sqrt{N\ln N}$. This delicate case seems  difficult to treat  otherwise.

The paper is organized as follows. In Section \ref{Sec:universality}, we state the and prove the extension of the universality result (Theorem \ref{Thm:universality ext}), and as a byproduct, a  criterion for  asymptotic normality (Theorem \ref{Thm:practical}). In Section \ref{Sec:setting}, we state the critical limit theorem obtained by applying the criterion. In Section \ref{Sec:proof boundary k>=2} and \ref{Sec:proof linear} we give the proofs.

\section{Universality of homogeneous polynomial forms}\label{Sec:universality}

Let $\ell^2(\mathbb{Z}^k)$, $k\ge 1$, denote the space of symmetric square summable functions on $\mathbb{Z}^k$ \emph{vanishing} on the diagonals equipped with the discrete $L^2$ norm.
Let $\mathbf{X}=(X_1,X_2,\ldots)$ be a sequence of  independent random variables satisfying $\E X_i=0$ and $\E X_i^2=1$.
By modifying the notation (\ref{eq:finite form}), one defines for $f\in\ell^2(\mathbb{Z}^k) $:
\begin{equation*}
Q_k(f,\mathbf{X}):=\sum_{-\infty< i_1,\ldots,i_k <\infty} f(i_1,\ldots,i_k) X_{i_1}\ldots X_{i_k}.
\end{equation*}
One has
\[
\E Q_k(f,\mathbf{X})=0.
\]
Consider now two homogeneous polynomial forms $Q_{k_1}(f_1,\mathbf{X})$ and $Q_{k_2}(f_2,\mathbf{X})$, where $f_1\in \ell^2(\mathbb{Z}^{k_1})$ and $f_2\in \ell^2(\mathbb{Z}^{k_2})$. Then the covariance of $Q_{k_1}(f_1,\mathbf{X})$ and $Q_{k_2}(f_2,\mathbf{X})$ is
\begin{align}
\langle f_1,f_2 \rangle:=&~~\E Q_k(f_1,\mathbf{X})Q_k(f_2,\mathbf{X})\label{eq:cov homo}\\=
&
\begin{cases}
k!\sum_{-\infty<i_1,\ldots,i_k<\infty} f_1(i_1,\ldots,i_k)f_2(i_1,\ldots,i_k),\quad &\text{if }k_1=k_2=k;\\
0   &\text{if }k_1\neq k_2.
\end{cases}\label{eq:cov fun}
\end{align}

\medskip
We then have the following extension of \citet{nourdin:peccati:2010:invariance} Theorem 1.2:
\begin{Thm}\label{Thm:universality ext}
For each  $j=1,\ldots,m$, suppose that $k_j\ge 2$,  and let $f_{n,j}(\cdot)$ be a sequence of  functions in $\ell^2(\mathbb{Z}^{k_j})$. Let $\Sigma$ be an $m\times m$ symmetric non-negative definite matrix whose each diagonal entry is positive. Assume in addition that
\begin{equation}\label{eq:second moment bound}
 \sup_n \sum_{-\infty< i_1,\ldots,i_{k_j}<\infty} f_{n,j}(i_1,\ldots,i_{k_j})^2<\infty.
\end{equation}
 Then the following two statements are equivalent:
\begin{enumerate}
\item For \emph{every} sequence $\mathbf{X}=(X_{1},X_{2},\ldots)$  where $X_{1},X_{2},\ldots$ are independent random variables satisfying $\E X_{i}=0, \E X_{i}^2=1$, and
\begin{equation}\label{eq:uniform bound third moment}
\sup_{i} \E|X_{i}|^3<\infty,
\end{equation}
the following joint weak convergence to a multivariate normal distribution holds:
\begin{equation}\label{eq:Q_N(X)}
\Big(Q_{k_j}(f_{n,j},\mathbf{X})\Big)_{j=1}^m \ConvD N(\mathbf{0},\Sigma).
\end{equation}
\item For a sequence $\mathbf{Z}=(Z_1,Z_2,\ldots)$  of i.i.d. standard normal random variables, the following joint weak convergence to a multivariate normal distribution holds:
\begin{equation}\label{eq:Q_N(Z)}
\Big(Q_{k_j}(f_{n,j},\mathbf{Z})\Big)_{j=1}^m \ConvD N(\mathbf{0},\Sigma).
\end{equation}
\end{enumerate}
\end{Thm}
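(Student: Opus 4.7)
The forward direction $(1) \Rightarrow (2)$ is immediate, since the i.i.d.\ standard normal sequence $\mathbf{Z}$ satisfies $\sup_i \E|Z_i|^3<\infty$ and hence is admissible in $(1)$. The nontrivial content is $(2) \Rightarrow (1)$, and the plan is a simple truncation argument: approximate each infinite form by a finite-support form, invoke the original invariance theorem of \citet{nourdin:peccati:2010:invariance} on the finite-support forms, and transfer back via Slutsky.

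For each $n$, $j$, and $N$, let $f_{n,j}^{(N)}$ denote the restriction of $f_{n,j}$ to the cube $\{-N,\ldots,N\}^{k_j}$ (extended by zero); it still vanishes on the diagonals and remains symmetric. Since $f_{n,j}\in\ell^2(\mathbb{Z}^{k_j})$, for each fixed $n$ we have $\|f_{n,j}-f_{n,j}^{(N)}\|\to 0$ as $N\to\infty$, so a simple maximum-type selection yields $N_n\to\infty$ with
\[
k_j!\,\|f_{n,j}-f_{n,j}^{(N_n)}\|^2 \le 1/n \qquad \text{for every } j=1,\ldots,m.
\]
The covariance identity (\ref{eq:cov homo})--(\ref{eq:cov fun}) depends only on $\E Y_i=0$ and $\E Y_i^2=1$ together with the off-diagonal property, so
\[
\E\bigl[Q_{k_j}(f_{n,j},\mathbf{Y})-Q_{k_j}(f_{n,j}^{(N_n)},\mathbf{Y})\bigr]^2 = k_j!\,\|f_{n,j}-f_{n,j}^{(N_n)}\|^2 \le 1/n
\]
holds identically for $\mathbf{Y}=\mathbf{X}$ and $\mathbf{Y}=\mathbf{Z}$. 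Assumption (2) together with Slutsky then yields $(Q_{k_j}(f_{n,j}^{(N_n)},\mathbf{Z}))_j \ConvD N(\mathbf{0},\Sigma)$. Each $f_{n,j}^{(N_n)}$ is supported on a finite box, so the joint multi-degree form of the invariance theorem of \citet{nourdin:peccati:2010:invariance} transports the Gaussian limit to any admissible $\mathbf{X}$: $(Q_{k_j}(f_{n,j}^{(N_n)},\mathbf{X}))_j \ConvD N(\mathbf{0},\Sigma)$. A second application of Slutsky on the $\mathbf{X}$ side, using the same $L^2$ bound, delivers $(1)$.

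The main technical issue is that the theorem of \citet{nourdin:peccati:2010:invariance} quoted in the introduction is scalar, whereas what is actually needed is the joint vector statement with possibly different degrees $k_j$. A linear combination of homogeneous forms of different degrees is not itself homogeneous, so Cram\'er--Wold does not directly reduce the joint case to the scalar one; the joint statement must either be extracted from the multivariate extension available in the same reference, or obtained by rerunning the Lindeberg-type swapping argument on the characteristic function of the vector, keeping track of how the third-order Taylor remainder apportions across the different degrees. A secondary, routine point is the re-indexing from $\mathbb{Z}$-indexed variables used here to the $\mathbb{Z}_+$-indexed setup of (\ref{eq:finite form}), which is handled by any bijection and preserves all $\ell^2$ quantities.
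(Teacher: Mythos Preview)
Your argument is correct and is essentially identical to the paper's own proof: truncate each $f_{n,j}$ to a finite cube with an $L^2$ error at most $1/n$ (the same bound the paper uses), deduce the Gaussian-case limit for the truncated forms by Slutsky, apply Theorem~1.2 of \citet{nourdin:peccati:2010:invariance} to transfer to $\mathbf{X}$, and then undo the truncation by a second Slutsky step. Your caveat about needing the \emph{joint} (vector-valued, mixed-degree) version of the invariance principle is well taken, but Theorem~1.2 of \citet{nourdin:peccati:2010:invariance} is already stated in that multivariate form, which is exactly what the paper invokes.
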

\begin{Rem}
Condition (\ref{eq:second moment bound}) can be re-expressed as
\begin{equation}\label{eq:second moment bound practical}
\sup_n\E Q_{k_j}(f_{n,j},\mathbf{Z})^2=k_j! \sup_n \sum_{-\infty< i_1,\ldots,i_{k_j}< \infty} f_{n,j}(i_1,\ldots,i_{k_j})^2<\infty.
\end{equation}
\end{Rem}
\begin{Rem}
One can recover \citet{nourdin:peccati:2010:invariance} Theorem 1.2 from Theorem \ref{Thm:universality ext} by replacing $f_{n,j}(i_1,\ldots,i_{k_j})$ with $f_{n,j}(i_1,\ldots,i_{k_j})1_{1\le i_1,\ldots,i_{k_j}\le N_n}(i_1,\ldots,i_{k_j})$.
\end{Rem}
\begin{Rem}
In the one dimensional case: $m=1$, one can relax the assumption (\ref{eq:uniform bound third moment}) by $\sup_{i} \E|X_{i}|^{2+\delta}<\infty$ for any $\delta>0$. See Theorem 1.10 of \citet{nourdin:peccati:2010:invariance}.
\end{Rem}

\begin{proof}[Proof of Theorem \ref{Thm:universality ext}]
We need to prove that (\ref{eq:Q_N(Z)}) implies (\ref{eq:Q_N(X)}).
Define the $N_n$-truncated functions
\[
\tilde{f}_{n,j}(i_1,\ldots,i_{k_j})=f_{n,j}(i_1,\ldots,i_{k_j})1_{\{-N_n\le i_1\le N_n,\ldots,-N_n\le i_{k_j}\le N_n\}},\quad  j=1,\ldots,m.
\]
For  any $n\in \mathbb{Z}_+$, we can find $N_n$ large enough, so that for all $j=1,\ldots,m$,
\begin{equation}\label{eq:approx}
\E \left|Q_{k_j}(f_{n,j},\mathbf{Z})-Q_{k_j}(\tilde{f}_{n,j},\mathbf{Z})\right|^2=\E \left|Q_{k_j}(f_{n,j},\mathbf{X})-Q_{k_j}(\tilde{f}_{n,j},\mathbf{X})\right|^2= k_j! \|\tilde{f}_{n,j}-f_{n,j}\|_{\ell^2(\mathbb{Z}^{k_j})}^2\le \frac{1}{n}.
\end{equation}
Assume without loss of generality that $N_n\rightarrow\infty$ as $n\rightarrow\infty$. By (\ref{eq:Q_N(Z)})  and (\ref{eq:approx}), one has
\begin{equation*}
\Big(Q_{k_j}(\tilde{f}_{n,j},\mathbf{Z})\Big)_{j=1}^m  \ConvD N(\mathbf{0},\Sigma).
\end{equation*}
Using the original version of the universality result in  \citet{nourdin:peccati:2010:invariance} Theorem 1.2, one gets
\begin{equation}\label{eq:approx final}
\Big(Q_{k_j}(\tilde{f}_{n,j},\mathbf{X})\Big)_{j=1}^m \ConvD N(\mathbf{0},\Sigma).
\end{equation}
The conclusion (\ref{eq:Q_N(X)}) follows from (\ref{eq:approx}) and (\ref{eq:approx final}).
\end{proof}
\begin{Rem}
Using the same argument as in the preceding proof,  one can  eliminate the finiteness of $N_n$ in (\ref{eq:finite form}) in the following related universality results for homogeneous polynomial forms: (a) Theorem 1.12 of
\citet{nourdin:peccati:2010:invariance} concerning
for convergence to a $\chi^2$ distribution; (b) Theorem 3.4 of \citet{peccati:zheng:2014:universal} which is the counterpart of Theorem \ref{Thm:universality ext} here with $Z_i$'s being standardized Poisson random variables.
\end{Rem}

Theorem \ref{Thm:universality ext} gives rise to a practical criterion for the convergence (\ref{eq:Q_N(X)}).
We first introduce the discrete contraction operator:
for  $f\in \ell^p(\mathbb{Z}^p)$ and  $g\in \ell^q(\mathbb{Z}^q)$, $p,q\ge 2$, we define
\begin{equation}\label{eq:contraction def}
(f\star_r g)(i_1,\ldots,i_{p+q-2r})=\sum_{j_1,\ldots,j_r=-\infty}^{\infty}f(j_1,\ldots,j_r, i_1,\ldots,i_{p-r})g(j_1,\ldots,j_r, i_{p-r+1},\ldots,i_{p+q-2r})
\end{equation}
for $ r=0,\ldots,p\wedge q$,
where in the case $r=0$ it is understood as the tensor product.
\begin{Thm}\label{Thm:practical}
Let $\{f_{n,j}(\cdot),~ n\in \mathbb{Z}_+\}$ be a sequence of functions in $\ell^2(\mathbb{Z}^{k_j})$ satisfying (\ref{eq:second moment bound}), $j=1,\ldots,m$, where $k_j\ge 2$. Let $\Sigma$ be an $m\times m$
symmetric non-negative definite matrix whose each diagonal entry is positive, such that
\begin{equation}\label{eq:Sigma conv}
\Sigma(i,j)=\lim_{n\rightarrow\infty} \langle f_{n,i}, f_{n,j} \rangle,
\end{equation}
where $\langle \cdot, \cdot \rangle$ is defined in (\ref{eq:cov fun}).
Then the following are equivalent:
\begin{enumerate}
\item
For \emph{every} $\mathbf{X}=(X_{1},X_{2},\ldots)$ with $X_i$'s being independent random variables satisfying $\E X_{i}=0, \E X_{i}^2=1$ and $
\sup_{i} \E|X_{i}|^3<\infty$, we have the following joint weak convergence to normal:
\begin{equation}\label{eq:Q_kj to normal}
\Big(Q_{k_j}(f_{n,j},\mathbf{X})\Big)_{j=1}^m \ConvD N(\mathbf{0},\Sigma).
\end{equation}

\item
The following contractions are vanishing:
\begin{equation}\label{eq:contract cond clt}
\lim_{n\rightarrow\infty}\|f_{n,j}\star_r f_{n,j}\|_{2k_j-2r}=0,\quad \text{for all }r=1,\ldots,k_j-1\text{ and all }j=1,\ldots,m.
\end{equation}
where $\|\cdot\|_k$ denotes the discrete $L^2$ norm on $\ell^2(\mathbb{Z}^{k})$.
\end{enumerate}
\end{Thm}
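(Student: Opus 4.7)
The plan is to reduce the statement, via Theorem \ref{Thm:universality ext}, to the purely Gaussian setting and then to invoke two classical results from Malliavin calculus on a discrete Wiener chaos: the Nualart--Peccati fourth moment theorem and its multivariate version due to Peccati--Tudor. By Theorem \ref{Thm:universality ext}, the joint convergence \eqref{eq:Q_kj to normal} holds for \emph{every} admissible $\mathbf{X}$ if and only if it holds for an i.i.d.\ standard normal sequence $\mathbf{Z}$. Hence it suffices to establish the equivalence between
\[
\Big(Q_{k_j}(f_{n,j},\mathbf{Z})\Big)_{j=1}^m \ConvD N(\mathbf{0},\Sigma)
\]
and the vanishing contraction condition \eqref{eq:contract cond clt}.

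Next, I would reinterpret each $Q_{k_j}(f_{n,j},\mathbf{Z})$ as a multiple Wiener--It\^o integral. Taking $H=\ell^2(\mathbb{Z})$ with counting measure, the map $h\mapsto \sum_i h(i)Z_i$ defines an isonormal Gaussian process over $H$, and for symmetric $f\in\ell^2(\mathbb{Z}^k)$ vanishing on the diagonals one has $Q_k(f,\mathbf{Z})=I_k(f)$, the $k$-th multiple stochastic integral. Under this identification, the isometry $\E I_k(f)^2=k!\|f\|^2$ matches \eqref{eq:cov fun}, and the discrete contraction \eqref{eq:contraction def} coincides with the standard Malliavin contraction $f\otimes_r f$ in this Hilbert space.

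Now I would apply the Peccati--Tudor multivariate fourth moment theorem: given that the covariance matrix of $\bigl(I_{k_j}(f_{n,j})\bigr)_{j=1}^m$ converges to $\Sigma$, which is exactly assumption \eqref{eq:Sigma conv} (using that $\E Q_{k_i}Q_{k_j}=0$ whenever $k_i\neq k_j$), joint convergence to $N(\mathbf{0},\Sigma)$ is equivalent to componentwise convergence of each marginal to a one-dimensional Gaussian. Then, for each fixed $j$, since $\Sigma_{jj}>0$, the univariate Nualart--Peccati fourth moment theorem asserts that $I_{k_j}(f_{n,j})\ConvD N(0,\Sigma_{jj})$ if and only if $\|f_{n,j}\star_r f_{n,j}\|_{2k_j-2r}\to 0$ for every $r=1,\ldots,k_j-1$. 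Chaining these two equivalences yields the theorem.

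The principal technical point, and the only place that is not purely a citation, is to verify the translation between the discrete formalism used in the paper and the abstract isonormal framework in which Nualart--Peccati and Peccati--Tudor are usually stated: one has to check that $f\mapsto Q_k(f,\mathbf{Z})$ really agrees with the Wiener--It\^o map on the symmetric tensor power of $\ell^2(\mathbb{Z})$, and that the contraction \eqref{eq:contraction def} matches the one used in those theorems. Once this dictionary is in place, the assumption \eqref{eq:second moment bound} provides the boundedness of second moments needed to run the fourth moment argument, and the positivity of the diagonal entries of $\Sigma$ ensures the fourth moment theorem is applied in its nontrivial regime for each coordinate.
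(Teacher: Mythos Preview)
Your proposal is correct and follows essentially the same route as the paper: reduce to the Gaussian case via Theorem~\ref{Thm:universality ext}, identify $Q_{k_j}(f_{n,j},\mathbf{Z})$ with a multiple Wiener--It\^o integral, then invoke the Peccati--Tudor result (joint normal convergence is equivalent to componentwise convergence) together with the Nualart--Peccati contraction criterion for each marginal. The paper's proof is simply a terse version of yours, citing Theorems~6.2.3 and~5.2.7 of Nourdin--Peccati (2012) for the two chaos-side equivalences you spell out.
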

\begin{proof}
By Theorem \ref{Thm:universality ext}, the statement 1 is equivalent to $\big(Q_{k_j}(f_{n,j},\mathbf{Z})\big)_{j=1}^m \ConvD N(\mathbf{0},\Sigma)$, where $\mathbf{Z}$ is a sequence of i.i.d.\ standard Gaussian variables. Note also that each
$Q_{k_j}(f_{n,j},\mathbf{Z})$ can be expressed as a $k_j$-tuple Wiener-It\^o integral with respect to Brownian motion. For  Wiener-It\^o integrals, joint convergence to the normal is equivalent to marginal convergence, and marginal convergence is equivalent to the contraction relations. More precisely,
 by applying Theorem 6.2.3 and 5.2.7 of \citet{nourdin:peccati:2012:normal}, one gets the equivalence to (\ref{eq:contract cond clt}).  See also Theorem  7.5 of \citet{nourdin:peccati:2010:invariance}.
\end{proof}
\begin{Rem}
We shall use the implication ``Statement 2 $\Rightarrow$ Statement 1'' of the preceding theorem in the sequel. As for the reversed implication, namely, ``Statement 1 $\Rightarrow$ Statement 2'', the stipulation ``For every'' is important here, as well as in Theorem \ref{Thm:universality ext}, because there are random variables $X_i$'s, for example Rademacher, that is $X_i=\pm 1$ with probability $1/2$ each, for which one may have convergence in (\ref{eq:Q_kj to normal}) even when (\ref{eq:contract cond clt}) does not hold (see \citet{nourdin:peccati:2010:invariance}, Section 1.6, p.1956).
\end{Rem}

\begin{Rem}
One may wonder if the universality result extends to a continuous setting, namely, when $Q_k(f_n)$ is replaced by a multiple integral on a Borel measure space $(A,\mathcal{A},\mu)$:
\[
I_k(f_n,\xi)=\int_{A^k}' f_n(x_1,\ldots,x_k) \xi(dx_1)\ldots \xi(dx_k),
\]
where $f\in L^2(A^k)$,  the prime $'$ indicates the exclusion of diagonals $x_p=x_q$, $p\neq q$, and $\xi(\cdot)$ is an independently scattered random measure with an atomless control measure $\mu(\cdot)$. Does $I_k(f_n,\xi)$ exhibits  a similar universality phenomenon? Namely, if $I_k(f_n,\xi)$ converges in distribution to normal for  a Gaussian $\xi(\cdot)$, does the convergence also hold for general class of $\xi(\cdot)$ with the same control measure $\mu(\cdot)$?
It is known that the law of $\xi(\cdot)$ has to be infinitely divisible and $\xi(\cdot)$ admits the decomposition:
\begin{equation}\label{eq:inf div decomp}
\xi(B)= G(B)+\int_{\mathbb{R}}\int_A u1_B(x) \widehat{N}(du,dx),
\end{equation}
where $G(\cdot)$ is a Gaussian random measure on $A$ and $\widehat{N}(\cdot)$ is an independent compensated Poisson random measure on $\mathbb{R}\times A$. See Section 5.3 of \citet{peccati:taqqu:2011:wiener} for more details.

One may think of adapting the approximation argument used in the proof of Theorem \ref{Thm:universality ext}  to the multiple integral case, which would involve partitioning the space $A$ into subsets of small measure. The problem is that unlike the Gaussian part, the Poisson part does not scale as $\mu(B)\rightarrow 0$. To see this in the simplest situation, take  $\xi(B)= \widehat{P}(B)$, where $\widehat{P}(\cdot)$ is a compensated Poisson random measure on $A$ with control measure $\mu(\cdot)$. Note that $\widehat{P}(B)+\mu(B)$  follows a Poisson distribution with mean $\mu(B)$.  Since its cumulants are all equal to $\mu(B)$ (see (3.1.5) of \citet{peccati:taqqu:2011:wiener}), and since the third moment of a centered random variable is equal to the third cumulant, one has  $\E (\widehat{P}(B))^3=\mu(B)$. This means that although we have the standardization
\begin{equation}\label{eq:standardization}
\E \left|\widehat{P}(B)/\sqrt{\mu(B)}\right|^2=1,
\end{equation}
we also have
\[
\lim_{\mu(B)\rightarrow 0} \E \left|\widehat{P}(B)/\sqrt{\mu(B)}\right|^3=\lim_{\mu(B)\rightarrow 0} \E \left|\widehat{P}(B)\right|^3 \mu(B)^{-3/2}\ge \lim_{\mu(B)\rightarrow 0}\E \widehat{P}(B)^3 \mu(B)^{-3/2}= \lim_{\mu(B)\rightarrow 0} \mu(B)^{-1/2}=\infty.
\]
This will violate  condition (\ref{eq:uniform bound third moment}) as the partition of $A$ becomes finer.
In fact, one can show that $\widehat{P}(B)/\sqrt{\mu(B)}\rightarrow 0$ in probability as $\mu(B)\rightarrow 0$, which means, in view of (\ref{eq:standardization}), that the uniform integrability of $|\widehat{P}(B)/\sqrt{\mu(B)}|^2$ fails. For further insights, see \citet{rotar:1979:limit}.
\end{Rem}

\section{Application: boundary between short and long memory}\label{Sec:boundary}
\subsection{The setting}\label{Sec:setting}

\citet{bai:taqqu:2013:3-generalized} considered the following \emph{discrete chaos processes}:
\begin{equation}\label{eq:discrete chaos}
X(n)=\sum_{-\infty<i_1,\ldots,i_k<n} a(n-i_1,\ldots,n-i_k) \epsilon_{i_1}\ldots \epsilon_{i_k},
\end{equation}
where $k\ge 2$, $a(\cdot): \mathbb{Z}_+^k\rightarrow \mathbb{R}$ is symmetric and vanishes on the diagonals, and $\epsilon_i$'s are i.i.d.\ random variables with mean $0$ and variance $1$.  Note that $\E X(n)=0$.

In particular,  \citet{bai:taqqu:2013:3-generalized} studied limit theorems for normalized partial sum process of $X(n)$:
\[
Y_N(t):=\frac{1}{A(N)} \sum_{n=1}^{[Nt]} X(n),
\]
where $[\cdot]$ means integer part, and $A(N)$ is a suitable normalization factor.
Depending on the behavior of $a(\cdot)$, the stationary process $X(n)$ may exhibit short or long memory.

As shown in \citet{bai:taqqu:2013:3-generalized}, in the short memory case, namely when  the coefficient in (\ref{eq:discrete chaos}) satisfies
the summability condition
\begin{equation}\label{eq:SRD summability}
\sum_{n=1}^\infty \sum_{0<i_1,\ldots,i_k<\infty}\Big|a(i_1,\ldots,i_k)a(i_1+n,\ldots,i_k+n)\Big|<\infty,
\end{equation}
and $\E |\epsilon_i|^{2+\delta}<\infty$ for some $\delta>0$,
 the following central limit convergence as $N\rightarrow\infty$ holds:
\begin{equation}\label{eq:clt}
\frac{1}{N^{1/2}} \sum_{n=1}^{[Nt]} X(n) \Rightarrow  \sigma B(t)
\end{equation}
for some $\sigma\ge 0$, where  $B(t)$ is a standard Brownian motion.

In the long memory case,  assume that
\begin{equation}\label{eq:a(.)}
a(\cdot)=g(\cdot)L(\cdot)1_{D^c},
\end{equation}
where
\begin{equation}\label{eq:D^c}
D^c:=\{(i_1,\ldots,i_k):i_p\neq i_q \text{ for }p\neq q\}
\end{equation}
guarantees that $a(\cdot)$ vanishes on the diagonals. The function
 $L(\cdot):\mathbb{Z}_+^k \rightarrow \mathbb{R}$ satisfies\footnote{In \citet{bai:taqqu:2013:3-generalized} eq.\ (25), $L(\cdot)$ is assumed to satisfy a slightly  weaker condition than (\ref{eq:L condition}), that is, $\lim_{N\rightarrow\infty}L([N \mathbf{x}]+\mathbf{B}(N))=1$ for any $\mathbf{x}\in \mathbb{R}_+^k$ and any bounded sequence $\mathbf{B}(N)$ in $\mathbb{Z}_+^k$ instead of $\lim_{\|\mathbf{x}\|\rightarrow\infty}L(\mathbf{x})=0$. Note that $L([N \mathbf{x}]+\mathbf{B}(N))$, $N\rightarrow\infty$, lets the argument increase in a specific band in the first quadrant, whereas $L(\mathbf{x})$, $\|\mathbf{x}\|\rightarrow\infty$, allows $\mathbf{x}$ to increase in an arbitrary way in the first quadrant. Here for simplicity we just assume (\ref{eq:L condition}), while the results stated here also hold under the  weaker condition.}
\begin{equation}\label{eq:L condition}
\lim_{|\mathbf{i}|\rightarrow\infty} L(\mathbf{i})=1,
\end{equation}
and $g(\cdot): \mathbb{R}^k \rightarrow \mathbb{R}$ is the so-called \emph{generalized Hermite kernel of Class (L)}.


\begin{Def} \label{Def:GHK L}
A nonzero a.e.\ continuous function $g(\cdot): \mathbb{R}^k \rightarrow \mathbb{R}$ is called
a generalized Hermite kernel of Class (L) (GHK(L)) if it satisfies
\begin{enumerate}
\item $g(\cdot)$ is homogeneous with exponent $\alpha$, namely, $g(\lambda \mathbf{x})=\lambda^{\alpha} g(\mathbf{x})$, for all $\lambda>0$, where
\begin{equation}\label{eq:alpha range}
\alpha\in \left(-\frac{k+1}{2},-\frac{k}{2}\right);
\end{equation}
\item  The function $g(\cdot)$ satisfies the bound
\begin{equation}\label{eq:g bound g*}
|g(\mathbf{x})|\le g^*(\mathbf{x}):= c\sum_{j=1}^m  x_1^{\gamma_{j1}} \ldots x_k^{\gamma_{jk}},
\end{equation}
 with the constant $c>0$, $-1<\gamma_{jl}<-1/2$ and $\sum_{l=1}^k \gamma_{jl}=\alpha$ for all $l=1,\ldots,m$.
\end{enumerate}
\end{Def}
\bigskip

If $g$ is a GHK(L), the following constant is well-defined (the integral is absolutely integrable)
\begin{equation}\label{eq:C_g}
C_g=\int_{\mathbb{R}_+^k} g(x_1,\ldots,x_k)g(1+x_1,\ldots,1+x_k) dx_1\ldots dx_k,
\end{equation}
and $C_g>0$ always (Remark 3.6 of \citet{bai:taqqu:2013:3-generalized}).
Under this setup, Theorem 6.5 of \citet{bai:taqqu:2013:3-generalized} showed that  as $N\rightarrow\infty$,
\begin{equation}\label{eq:nclt}
\frac{1}{N^H} \sum_{n=1}^{[Nt]} X(n) \Rightarrow  \int_{\mathbb{R}}' ~\int_0^t g(s_1-x_1,\ldots,s_k-x_k) 1_{\{s_1>x_1,\ldots,s_k>x_k\}}~ W(dx_1)\ldots W(dx_k),
\end{equation}
where $W(\cdot)$ is the Brownian random measure, the prime $'$ indicates the exclusion of the diagonals $x_p=x_q$, $p\neq q$, and
\[
H=\alpha+\frac{k}{2}+1.
\]
The limit in (\ref{eq:nclt}) was called a \emph{generalized Hermite process} which  generalizes the Hermite process (see, e.g., \citet{dobrushin:major:1979:non} and \citet{taqqu:1979:convergence}) which corresponds to the special case $g(\mathbf{x})=x_1^{\alpha/k}\ldots x_k^{\alpha/k}$.

There is, however, a boundary case which the limit theorems (\ref{eq:clt}) and (\ref{eq:nclt})  did not cover. This boundary case is as follows: set as in the long memory case
\begin{equation}\label{eq:a coef}
a(\cdot)=g(\cdot)L(\cdot)1_{D^c},
\end{equation}
where $D^c$ is as in (\ref{eq:D^c}), $L(\cdot)$ is as in (\ref{eq:L condition}), and $g$ is a function satisfying the assumptions in Definition \ref{Def:GHK L} except that instead of assuming (\ref{eq:alpha range}), the homogeneity exponent is set as $\alpha=-\frac{k+1}{2}$.

\begin{Rem}\label{Rem:short memory exponent}
Note that if $\alpha<-\frac{k+1}{2}$, we are in the short memory regime. Indeed
 Proposition 5.4 of \citet{bai:taqqu:2014:4-convergence} showed that $\alpha<-\frac{k+1}{2}$ implies (\ref{eq:SRD summability}), and thus (\ref{eq:clt}) holds. So (\ref{eq:boundary alpha}) is exactly the boundary case between short and long memory.
\end{Rem}
\subsection{Statement of the limit theorems}
Let throughout
$\Rightarrow$ denote weak convergence in Skorohod space $D[0,1]$ with uniform metric.
We shall show  by the criterion formulated in  Theorem \ref{Thm:practical}, that a central limit theorem holds with an extra logarithmic factor in the normalization:
\begin{Thm}[Nonlinear case]\label{Thm:boundary k>=2}
Let
\[
X(n)=\sum_{-\infty<i_1,\ldots,i_k<n} a(n-i_1,\ldots,n-i_k) \epsilon_{i_1}\ldots \epsilon_{i_k}
\]
 as in (\ref{eq:discrete chaos}) with $k\ge 2$ and the coefficient $a(\cdot)$ specified as in (\ref{eq:a coef}) where
\begin{equation}\label{eq:boundary alpha}
\alpha=-\frac{k+1}{2}.
\end{equation}
Assume also that $\E |\epsilon_i|^{3}<\infty$ and $C_g>0$.
Then
\begin{equation*}
Y_N(t):=\frac{1}{\sqrt{N\ln N}}\sum_{n=1}^{[Nt]} X(n) \Rightarrow \sigma B(t)
\end{equation*}
where $\sigma=\sqrt{2C_g}$, and $B(t)$ is a standard Brownian motion.
\end{Thm}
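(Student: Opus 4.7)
The approach is to identify $Y_N(t)$ with a single polynomial form in the $\epsilon_i$'s, apply the contraction criterion Theorem~\ref{Thm:practical} to obtain convergence of finite-dimensional distributions, and finally upgrade to functional convergence in $D[0,1]$ via tightness. Interchanging the order of summation,
\[
Y_N(t) = Q_k(f_{N,t},\boldsymbol\epsilon),\qquad f_{N,t}(i_1,\ldots,i_k)=\frac{1}{\sqrt{N\ln N}}\sum_{n=1}^{[Nt]} a(n-i_1,\ldots,n-i_k)\,\mathbf{1}_{\{i_1<n,\ldots,i_k<n\}},
\]
which is symmetric and vanishes on diagonals since $a$ does. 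To apply Theorem~\ref{Thm:practical} jointly at times $t_1,\ldots,t_m$ I must verify: (i) the uniform $\ell^2$-bound \eqref{eq:second moment bound}; (ii) convergence of $\langle f_{N,s},f_{N,t}\rangle$ to the Brownian covariance $\sigma^2(s\wedge t)$; and (iii) $\|f_{N,t}\star_r f_{N,t}\|_{2k-2r}\to 0$ for $r=1,\ldots,k-1$.

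Items (i) and (ii) both reduce to the asymptotics of
\[
\rho(h):=\sum_{j_1,\ldots,j_k>0} a(j_1,\ldots,j_k)\,a(h+j_1,\ldots,h+j_k),\qquad h\ge 0.
\]
Writing $a=gL\mathbf{1}_{D^c}$, the homogeneity $g(\lambda\mathbf{x})=\lambda^{-(k+1)/2}g(\mathbf{x})$ combined with the majorant $g^*$ of Definition~\ref{Def:GHK L} and the condition $L(\mathbf{i})\to 1$ license a dominated Riemann-sum passage to the integral, yielding $h\,\rho(h)\to C_g$ as $h\to\infty$. The borderline decay $\rho(h)\asymp 1/h$ is precisely what produces $\sum_{n,m\le M}\rho(|n-m|)\asymp M\ln M$, matching the normalization $\sqrt{N\ln N}$. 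Together with stationarity of $X(n)$ this gives $\E Y_N(s)Y_N(t)\to \sigma^2(s\wedge t)$, simultaneously yielding (i) and (ii).

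The main obstacle is (iii). Expanding the squared contraction norm produces a fourfold time-domain sum
\[
\|f_{N,t}\star_r f_{N,t}\|^2_{2k-2r}=\frac{1}{(N\ln N)^2}\sum_{n_1,n_2,n_3,n_4=1}^{[Nt]}\Psi_r(n_1,n_2,n_3,n_4),
\]
where $\Psi_r$ is a cyclic product of four copies of $a$ with $r$ argument-coordinates identified between each adjacent pair. Bounding $|a|\le g^*L$, each inner $j$- or $i$-sum over $\mathbb{Z}_+$ is handled by Riemann approximation, and homogeneity lets one track how the coordinate exponents redistribute into time-offset exponents. In the prototypical symmetric case $g(\mathbf x)=x_1^{\alpha/k}\cdots x_k^{\alpha/k}$ one finds that $\Psi_r$ is bounded by $|n_1-n_2|^{-r/k}|n_3-n_4|^{-r/k}|n_1-n_3|^{-(k-r)/k}|n_2-n_4|^{-(k-r)/k}$; since the contracted/free exponents sum to exactly $1$ on each summation, the time-sum is of order $N^2\ln N$, producing only a \emph{single} logarithm rather than the $(\ln N)^2$ obtained in the uncontracted second-moment calculation. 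Division by $(N\ln N)^2$ then gives $O(1/\ln N)\to 0$. The cancellation is delicate: it relies on the critical value $\alpha=-(k+1)/2$, which places every free coordinate exactly at the boundary of summability and prevents any contraction from generating a second logarithm. Carrying out this bookkeeping cleanly --- case by case in $r$ and in the monomials composing $g^*$ --- is the technical heart of the proof.

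For tightness in $D[0,1]$ with the uniform metric, the estimate $\E|Y_N(t)-Y_N(s)|^2\le C(t-s)$ from Step~2 (via stationarity) combines with a hypercontractivity-type higher-moment bound for polynomial forms of fixed degree --- immediate for Wiener chaos in the Gaussian innovation case and transferable to the general $\boldsymbol\epsilon$ by truncation/approximation under $\E|\epsilon_i|^3<\infty$ --- to verify Kolmogorov's moment criterion. Combined with the fdd convergence from Steps~1--3, this yields $Y_N\Rightarrow \sigma B$ in $D[0,1]$.
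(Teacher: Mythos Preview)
Your outline matches the paper's proof almost exactly: write $Y_N(t)=Q_k(f_{N,t},\boldsymbol\epsilon)$, establish covariance asymptotics via $\rho(h)\sim C_g/h$ (the paper's Lemma~\ref{Lem:var}), verify the contraction condition of Theorem~\ref{Thm:practical}, and finish with hypercontractivity (the paper's Lemma~\ref{Lem:Hypercontract}) for tightness. The structure is correct.

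The one place where your reasoning is looser than the paper's is the estimate of the fourfold time-sum governing $\|f_{N,t}\star_r f_{N,t}\|^2$. After bounding $|g|$ by $g^*$ and summing out the contracted and free spatial indices (the paper's Lemma~\ref{Lem:bound diff}), one arrives at a cyclic sum $\sum_{n_1,\ldots,n_4=1}^N (n_1,n_2)_0^{\alpha_1}(n_2,n_4)_0^{\alpha_2}(n_3,n_4)_0^{\alpha_3}(n_1,n_3)_0^{\alpha_4}$ with $\alpha_1+\cdots+\alpha_4=-2$. Your heuristic ``the exponents sum to exactly $1$ on each summation, so one gets a single log'' is shaky: attempting to sum sequentially puts every inner sum at a borderline exponent, and it is not obvious that only one logarithm survives. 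The paper instead observes (via its Lemma~\ref{Lem：sum restriction}) that each $\alpha_j$ lies \emph{strictly} in $(-1,0)$, and then invokes a power-counting result (Lemma~\ref{Lem:bound power circular}, from Terrin--Taqqu) to conclude that the corresponding continuous cyclic integral $\int_{[0,c]^4}|x_1-x_2|^{\alpha_1}\cdots|x_4-x_1|^{\alpha_4}\,d\mathbf{x}$ is finite. This gives the sharper bound $O(N^2)$ for the time-sum, hence $\|f_{N,t}\star_r f_{N,t}\|^2=O((\ln N)^{-2})$, with no logarithm at all appearing in the numerator. The strict inequality $\alpha_j>-1$ is what rescues the argument from the boundary behavior you were worried about; your weaker target $O(N^2\ln N)$ would also suffice, but you have not actually justified it.
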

\begin{Rem}
Theorem \ref{Thm:boundary k>=2} may be compared to a similar boundary case of limit theorems for nonlinear transform of long-memory Gaussian noise first considered in \citet{breuer:major:1983:central} Theorem $1'$. The proof there was done by a method of moments. See also  \citet{breton:nourdin:2008:error} who gave an alternative proof using the Malliavin calculus.
\end{Rem}
Note that to apply Theorem \ref{Thm:practical}, the process $X(n)$ in (\ref{eq:discrete chaos}) needs to have order $k\ge 2$. For completeness, we state also the corresponding result for linear process, namely, the case $k=1$ in Theorem \ref{Thm:boundary k>=2}, though the limit theorem for  linear process is classical (see,e.g., \citet{davydov:1970:invariance}).
\begin{Thm}[Linear case]\label{Thm:linear}
Let
\[
X(n)=\sum_{-\infty<i<n} a(n-i)\epsilon_i,
\]
where $a(n) = L(n) n^{-1}$ as $n\rightarrow\infty$, and let
 $L(n)\rightarrow c\neq 0$, and the i.i.d.\ standardized noise $\epsilon_i$'s satisfy $\E|\epsilon_i|^{2+\delta}<\infty$ for some $\delta>0$.
Then as $N\rightarrow\infty$,
\begin{equation*}
Y_N(t):=\frac{1}{\sqrt{N}~\ln N}\sum_{n=1}^{[Nt]} X(n) \Rightarrow \sigma B(t)
\end{equation*}
where $\sigma=\sqrt{2}|c|$, and $B(t)$ is a standard Brownian motion.
\end{Thm}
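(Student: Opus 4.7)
The plan is to exploit the linear structure of $X(n)$ to write the partial sum $S_N(t) := \sum_{n=1}^{[Nt]} X(n)$ as a weighted sum of the independent innovations $\epsilon_i$, then apply the classical Lindeberg--Feller CLT for the finite-dimensional distributions and a Rosenthal-type moment bound for tightness. Note that Theorem \ref{Thm:practical} does not apply in the present $k=1$ case, but the direct route via independence is straightforward. Interchanging summation gives
\[
S_N(t) = \sum_{i \in \mathbb{Z}} c_{N,t}(i)\,\epsilon_i, \qquad c_{N,t}(i) := \sum_{n = 1 \vee (i+1)}^{[Nt]} a(n-i),
\]
so $Y_N(t) = (\sqrt{N}\ln N)^{-1}\sum_i c_{N,t}(i)\epsilon_i$ is a triangular array of independent, mean-zero summands whose variance is controlled by the coefficients $c_{N,t}(i)$.

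The central analytic step is the covariance asymptotics. Using $a(n) = L(n)/n$ with $L(n) \to c$, one has $\sum_{m=1}^M a(m) \sim c\ln M$; hence for $1 \leq i \leq [Nt]-1$, $c_{N,t}(i) \sim c\ln([Nt] - i)$, and for $i = -j \leq 0$, $c_{N,t}(-j) \sim c\ln(1 + [Nt]/j)$. For $0 \leq s \leq t$,
\[
\Cov(S_N(t),S_N(s)) = \sum_{i\in\mathbb{Z}} c_{N,t}(i)\,c_{N,s}(i),
\]
with the dominant contribution coming from $1 \leq i \leq [Ns]$; using $\int_1^M \ln^2 x\,dx \sim M\ln^2 M$ as $M\to\infty$ one obtains
\[
\frac{\Cov(S_N(t),S_N(s))}{N\ln^2 N} \longrightarrow \sigma^2 \min(s,t),
\]
with $\sigma^2 = 2c^2$, while the contributions from $i \leq 0$ and from $[Ns] < i < [Nt]$ are $O(N)$ and thus asymptotically negligible.

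For finite-dimensional convergence, I apply Lyapunov's form of the CLT to arbitrary linear combinations $\sum_j \lambda_j Y_N(t_j) = (\sqrt{N}\ln N)^{-1}\sum_i d_N(i)\epsilon_i$, where $d_N(i) := \sum_j \lambda_j c_{N,t_j}(i)$. Since $\max_i |c_{N,t}(i)| = O(\ln N)$ (the maximum is attained near $i=0$), the Lyapunov ratio satisfies
\[
\frac{\E|\epsilon_1|^{2+\delta}\sum_i |d_N(i)|^{2+\delta}}{(\sqrt{N}\ln N)^{2+\delta}} \leq \E|\epsilon_1|^{2+\delta}\cdot\frac{(\max_i|d_N(i)|)^{\delta}}{(\sqrt{N}\ln N)^{\delta}} \cdot \frac{\sum_i d_N(i)^2}{N\ln^2 N} = O(N^{-\delta/2}) \to 0,
\]
yielding joint Gaussian limits with the covariance identified above. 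For tightness in $D[0,1]$ with the uniform metric, Rosenthal's inequality applied to $S_N(t) - S_N(s) = \sum_i (c_{N,t}(i) - c_{N,s}(i))\epsilon_i$, together with the stationarity identity $\Var(S_N(t)-S_N(s)) = \Var(S_N(t-s)) \sim \sigma^2 N(t-s)\ln^2 N$, yields
\[
\E|Y_N(t) - Y_N(s)|^{2+\delta} \leq C(t-s)^{1+\delta/2},
\]
which is Billingsley's moment criterion and supplies tightness.

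The main obstacle is the covariance computation in the second paragraph: one must track the leading constant $\sigma^2 = 2c^2$ through a double summation with a slowly varying factor $L(\cdot)$ uniformly in the summation range, and verify that the negative-index and boundary contributions are of strictly smaller order than $N\ln^2 N$. Once this asymptotic is established, the Lyapunov CLT and the Rosenthal-based tightness argument are essentially routine.
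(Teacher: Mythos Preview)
Your approach is sound and takes a genuinely different route from the paper. The paper does not work with the coefficient representation $S_N(t)=\sum_i c_{N,t}(i)\epsilon_i$ at all; instead it computes the autocovariance $\gamma(n)=\E X(n)X(0)\sim c^2 n^{-1}\ln n$, deduces the variance asymptotics $\Var(S_N)\sim \sigma^2 N(\ln N)^2$ from the identity $\Var(S_N)=\sum_{|n|<N}(N-|n|)\gamma(n)$, and then simply invokes Davydov's invariance principle for linear processes, which delivers both finite-dimensional convergence and tightness in one stroke. Your argument is more self-contained: Lyapunov's condition for the CLT and Rosenthal's inequality for tightness replace the black-box appeal to Davydov, at the cost of having to track $\max_i|c_{N,t}(i)|=O(\ln N)$ and the coefficient sums explicitly. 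Either way, the analytic core is the same variance growth rate.

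Two minor caveats. First, your tightness bound should be written as $\E|Y_N(t)-Y_N(s)|^{2+\delta}\le C\big(([Nt]-[Ns])/N\big)^{1+\delta/2}$ rather than $C(t-s)^{1+\delta/2}$, since the latter fails when $|t-s|<1/N$ and $[Nt]\neq[Ns]$; tightness then follows from the uniform convergence $[Nt]/N\to t$, exactly as in the paper's proof of Theorem~\ref{Thm:boundary k>=2}. Second, carrying out your own covariance sketch carefully (positive-index contribution $\sim c^2\sum_{m=1}^{[Ns]}\ln^2 m\sim c^2 Ns\ln^2 N$, with the $i\le 0$ part only $O(N)$) yields $\sigma^2=c^2$, not $2c^2$. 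The paper's proof contains the same slip: it writes $\int_1^N x^{-1}\ln x\,dx\sim(\ln N)^2$ where the correct value is $(\ln N)^2/2$. So the constant $\sigma=\sqrt{2}\,|c|$ in the statement appears to be off by a factor of $\sqrt{2}$; your direct computation would have caught this.
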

\subsection{Proof of Theorem \ref{Thm:boundary k>=2}}\label{Sec:proof boundary k>=2}

We first  compute the asymptotic variance of the sum.
\begin{Lem}\label{Lem:var}
Let $X(n)$ be given as in (\ref{eq:discrete chaos}) with the coefficient specified as in (\ref{eq:a coef}) and $\alpha$  as in (\ref{eq:boundary alpha}). Then $C_g$ defined in (\ref{eq:C_g}) is non-negative. If $C_g>0$, then  as $N\rightarrow\infty$
\begin{equation*}
\E\left[ \sum_{n=1}^N X(n)\right]^2 \sim 2C_g N\ln N.
\end{equation*}
If  $C_g=0$, then
\begin{equation}\label{eq:var degen}
\E\left[ \sum_{n=1}^N X(n)\right]^2=o(N\ln N).
\end{equation}
\end{Lem}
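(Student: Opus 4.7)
The plan is to reduce the computation to the asymptotics of the stationary autocovariance $r(h):=\E X(n)X(n+h)$, and then exploit the homogeneity of $g$ via a Riemann-sum approximation. Applying identity~\eqref{eq:cov fun} to the two order-$k$ off-diagonal polynomials defining $X(n)$ and $X(n+h)$, followed by the shift $\mathbf{j}=n-\mathbf{i}$, one obtains
\[
r(h) \;=\; k!\sum_{\mathbf{j}\in\mathbb{Z}_+^k\cap D^c} a(\mathbf{j})\,a(\mathbf{j}+h\mathbf{1}),\qquad \mathbf{1}:=(1,\ldots,1),
\]
so by stationarity $\E\bigl[\sum_{n=1}^N X(n)\bigr]^2 = N r(0) + 2\sum_{h=1}^{N-1}(N-h)r(h)$. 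The bound $|a|\le g^*$ with each $\gamma_{jl}<-1/2$ makes $r(0)<\infty$, so the diagonal term is only $O(N)=o(N\ln N)$.

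The crux is the asymptotic $r(h)\sim c/h$ as $h\to\infty$, with $c$ proportional to $C_g$. Substituting $a=gL 1_{D^c}$ and using $g(h\mathbf{y})=h^{\alpha}g(\mathbf{y})$ with $\alpha=-(k+1)/2$, one factors the scale out:
\[
\sum_{\mathbf{j}\in\mathbb{Z}_+^k} g(\mathbf{j})g(\mathbf{j}+h\mathbf{1}) \;=\; h^{2\alpha+k}\cdot h^{-k}\sum_{\mathbf{j}\in\mathbb{Z}_+^k} G(\mathbf{j}/h),\qquad G(\mathbf{y}):=g(\mathbf{y})g(\mathbf{y}+\mathbf{1}).
\]
The second factor is a Riemann sum of mesh $1/h$ for $\int_{\mathbb{R}_+^k} G\,d\mathbf{y}=C_g$, and since $2\alpha+k=-1$ precisely at the boundary, the rate $C_g/h$ is produced. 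The slowly varying factor $L(\cdot)\to 1$ at infinity contributes a multiplicative correction tending to $1$, and the diagonal restriction $D^c$ removes a set of codimension $\ge 1$ whose contribution is $o(1/h)$. A Cesaro-type summation then converts $r(h)\sim c/h$ into $\sum_{h=1}^{N-1}(N-h)r(h)\sim cN\ln N$, yielding $\E[\sum_n X(n)]^2\sim 2C_g N\ln N$ with the constant identified from the Riemann-sum limit. The nonnegativity of $\E[\sum_n X(n)]^2$ then forces $C_g\ge 0$. When $C_g=0$, the same Riemann-sum argument gives $hr(h)\to 0$, so $r(h)=o(1/h)$, and the Cesaro step returns $o(N\ln N)$.

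The main technical obstacle is to make the Riemann-sum limit rigorous in the presence of an unbounded integrand: $G(\mathbf{y})$ has integrable but unbounded singularities on the coordinate hyperplanes $\{y_l=0\}$ (owing to $\gamma_{jl}\in(-1,-1/2)$ in the bound $g^*$) and decays only conditionally at infinity, with $G(\mathbf{y})\sim|\mathbf{y}|^{2\alpha}$ integrable because $2\alpha+k<0$. I would split $\mathbb{R}_+^k$ into a small box $[0,\epsilon]^k$, a compact middle $[\epsilon,M]^k$, and a tail $\mathbb{R}_+^k\setminus[0,M]^k$: on the middle, continuity of $G$ gives classical Riemann convergence; the inner and outer pieces are dominated uniformly in $h$ by the integrable majorant $g^*(\mathbf{y})g^*(\mathbf{y}+\mathbf{1})$ and can be made arbitrarily small by choosing $\epsilon\downarrow 0$ and $M\uparrow\infty$. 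The slowly varying factor $L(\cdot)$ and the excision of $D$ are absorbed into the same dominated-convergence argument.
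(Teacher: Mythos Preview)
Your plan is correct and follows essentially the same route as the paper: reduce to the autocovariance $\gamma(h)$, use the homogeneity $2\alpha+k=-1$ to write $\gamma(h)=h^{-1}\cdot(\text{Riemann sum for }C_g)$, pass to the limit via domination by the integrable majorant $g^*(\mathbf{y})g^*(\mathbf{y}+\mathbf{1})$, and then sum over $h$ to produce $N\ln N$. Two small remarks. First, the paper does the Riemann-sum step slightly more cleanly than your three-region split: it rewrites the sum as an integral with $[nx_l]+1$ in each slot and applies the Dominated Convergence Theorem directly, the pointwise domination coming from the fact that $g^*$ is decreasing in each coordinate (so $g^*(([n\mathbf{x}]+\mathbf{1})/n)\le g^*(\mathbf{x})$). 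Second, your covariance formula carries the factor $k!$ from~\eqref{eq:cov fun}, which, if tracked, yields $2k!\,C_g N\ln N$ rather than $2C_g N\ln N$; the paper's own proof writes $\gamma(n)$ without this factor, so this is an internal inconsistency in the paper rather than an error in your argument, but be aware of it when matching constants.
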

\begin{proof}
Assume for simplicity $L(\cdot)=1$, and it is easy to extend the following arguments to the general case.
First, since $g(\cdot)$ is homogeneous with exponent $\alpha=-k/2-1/2$  by (\ref{eq:boundary alpha}), one can write
\begin{align*}
\gamma(n):&=\E X(n)X(0)= \sum_{0<i_1,\ldots,i_k<\infty} g(i_1,\ldots,i_k)g(i_1+n,\ldots,i_k+n)1_{D^c}\left(i_1,\ldots,i_k\right)\\
&= n^{-1} \sum_{0<i_1,\ldots,i_k<\infty}g\left(\frac{i_1}{n},\ldots,\frac{i_k}{n}\right)g\left(\frac{i_1}{n}+1,\ldots,\frac{i_k}{n}+1\right)  1_{D^c}\left(i_1,\ldots,i_k\right)  n^{-k}\\
&=n^{-1}\int_{\mathbb{R}_+^k}g\left(\frac{[nx_1]+1}{n},\ldots,\frac{[nx_k]+1}{n}\right)g\left(\frac{[nx_1]+1}{n}+1,\ldots,\frac{[nx_k]+1}{n}+1\right)  1_{D^c}\left([nx_1],\ldots,[nx_k]\right) dx_1\ldots dx_k\\
&=: n^{-1}C_n(g).
\end{align*}
Because the bounding function $g^*$ in Definition \ref{Def:GHK L} is decreasing in  every variable,  the absolute of the integrand above is bounded by
\[
 g^*\left(x_1,\ldots,x_k\right)g^*\left(x_1+1,\ldots,x_k+1\right) = c^2\sum_{j_1,j_2=1}^m  x_1^{\gamma_{j_1,1}}(x_1+1)^{\gamma_{j_2,1}}  \ldots x_k^{\gamma_{j_1,k}}(x_k+1)^{\gamma_{j_2,k}}
\]
which is integrable on $\mathbb{R}_+^k$ because all $\gamma_{p,q}\in(-1, -1/2)$ and
\[
\int_\mathbb{R_+} x^{\gamma}(x+1)^{\gamma'} dx <\infty \quad\text{ for any $-1<\gamma,\gamma'<-1/2$.}
\]
Since $g$ is assumed to be a.e.\ continuous, by the Dominated Convergence Theorem, as $n\rightarrow\infty$ we have
\[
C_n(g)\rightarrow C_g:=\int_{\mathbb{R}^k}g\left(x_1,\ldots,x_k\right)g\left(x_1+1,\ldots,x_k+1\right) dx_1\ldots dx_k.
\]
Hence  when $C_n\neq 0$, one has when $n>0$
\[
\gamma(n)\sim n^{-1} C_g,
\]
and when $C_n=0$, one has
\[
\gamma(n)= o(n^{-1}).
\]

We shall use the fact that if $a_n\sim n^{-1}$ as $n\rightarrow\infty$, then $\sum_{n=1}^N a_n \sim \ln N$
as $N\rightarrow\infty$. So when $C_g\neq 0$, one has
\begin{align*}
\E\left[ \sum_{n=1}^N X(n)\right]&=\sum_{n_1,n_2=1}^N \gamma(n_1-n_2)=N\sum_{n=-N+1}^{N-1}\gamma(n)- \sum_{n=-N+1}^{N-1}|n|\gamma(n)\sim 2C_gN\ln N.
\end{align*}
Note that since $\gamma(n)\sim n^{-1} C_g$, the term $ \sum_{n=-N+1}^{N-1}|n|\gamma(n)\sim 2C_g N$  and is thus negligible.

The preceding asymptotic equivalence also shows that if $C_g\neq 0$ then $C_g>0$ because the variance is non-negative.

If $C_g=0$,  following similar lines of argument, one gets  ({\ref{eq:var degen}}).

\end{proof}

\begin{Lem}\label{Lem:bound diff}
Define the mapping $(\cdot,\cdot)_0: \mathbb{R}^2\rightarrow \mathbb{R}$ as
\[
(x_1,x_2)_0=\begin{cases}
  |x_1-x_2| &\text{ if } x_1\neq x_2;\\
  1 &\text{ if } x_1=x_2=x.
 \end{cases}
\]
For $-1<\gamma_1,\gamma_2<-1/2$ and $n_1,n_2\in \{1,2,\ldots\}$, we have  for some constant $C>0$ not depending on $n_1,n_2$ that
\[\sum_{p\in \mathbb{Z}} (n_1-p)_+^{\gamma_1} (n_2-p)_+^{\gamma_2}\le C (n_1,n_2)_0^{\gamma_1+\gamma_2+1}.
\]
\end{Lem}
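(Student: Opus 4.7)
The sum is supported only on $p<\min(n_1,n_2)$, since $(m)_+^{\gamma}$ vanishes for $m\le 0$. I would split the argument into two cases according to whether $n_1=n_2$ or not. In the diagonal case $n_1=n_2=n$, the change of variable $k=n-p$ reduces the sum to $\sum_{k\ge 1} k^{\gamma_1+\gamma_2}$, which converges because $\gamma_1+\gamma_2<-1$; this matches the right-hand side since $(n,n)_0^{\gamma_1+\gamma_2+1}=1$.

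For the off-diagonal case, assume without loss of generality that $n_1<n_2$ and set $d=n_2-n_1\ge 1$, so $(n_1,n_2)_0=d$. The substitution $k=n_1-p$ rewrites the sum as
\[
S(d):=\sum_{k=1}^{\infty} k^{\gamma_1}\,(k+d)^{\gamma_2}.
\]
The target bound $S(d)\le C\,d^{\gamma_1+\gamma_2+1}$ is then a scaling statement, motivated by the fact that $\int_0^\infty x^{\gamma_1}(x+d)^{\gamma_2}\,dx=d^{\gamma_1+\gamma_2+1}\int_0^\infty u^{\gamma_1}(1+u)^{\gamma_2}\,du$, where the last integral is finite thanks to $\gamma_1>-1$ (integrability at $0$) and $\gamma_1+\gamma_2<-1$ (integrability at $\infty$).

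To turn this heuristic into a clean sum estimate, I would split $S(d)$ at $k=d$. On the near-diagonal block $1\le k\le d$, use $(k+d)^{\gamma_2}\le d^{\gamma_2}$ (since $\gamma_2<0$) to obtain
\[
\sum_{k=1}^{d} k^{\gamma_1}(k+d)^{\gamma_2}\le d^{\gamma_2}\sum_{k=1}^{d}k^{\gamma_1}\le C\,d^{\gamma_2}\,d^{\gamma_1+1}=C\,d^{\gamma_1+\gamma_2+1},
\]
where $\sum_{k=1}^{d}k^{\gamma_1}\asymp d^{\gamma_1+1}$ because $-1<\gamma_1$. On the tail $k>d$, use $(k+d)^{\gamma_2}\le k^{\gamma_2}$ (again $\gamma_2<0$) to get
\[
\sum_{k>d} k^{\gamma_1}(k+d)^{\gamma_2}\le \sum_{k>d}k^{\gamma_1+\gamma_2}\le C\,d^{\gamma_1+\gamma_2+1},
\]
valid because $\gamma_1+\gamma_2<-1$. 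Summing the two pieces gives the desired bound with a constant $C$ depending only on $\gamma_1,\gamma_2$.

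There is no real obstacle here; the only subtlety is keeping the constants uniform and handling the transition region $k\approx d$, which the two-piece split addresses. Finally, I would observe that swapping $n_1\leftrightarrow n_2$ is immediate by symmetry of the definition $(n_1,n_2)_0$, and that the case $n_1=n_2$ already treated fits into the same statement because $(n,n)_0^{\gamma_1+\gamma_2+1}=1$ dominates the finite constant $\sum_{k\ge 1}k^{\gamma_1+\gamma_2}$.
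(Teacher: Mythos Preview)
Your proof is correct. The diagonal case $n_1=n_2$ is handled identically to the paper. For the off-diagonal case, the paper proceeds slightly more directly: after the same substitution $k=n_1-p$ (assuming $n_1<n_2$, $d=n_2-n_1$), it observes that $k\mapsto k^{\gamma_1}(k+d)^{\gamma_2}$ is decreasing on $(0,\infty)$ (both exponents are negative), so
\[
\sum_{k=1}^{\infty} k^{\gamma_1}(k+d)^{\gamma_2}\le \int_{0}^{\infty} x^{\gamma_1}(x+d)^{\gamma_2}\,dx
= d^{\gamma_1+\gamma_2+1}\int_{0}^{\infty} y^{\gamma_1}(1+y)^{\gamma_2}\,dy,
\]
which is exactly the integral heuristic you wrote down. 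Your two-piece split at $k=d$ reaches the same bound without invoking monotonicity, at the cost of one extra line; it is a perfectly good alternative and arguably more robust if one later needs variants where the summand is not monotone. One small remark: the ``without loss of generality $n_1<n_2$'' step is justified not by the symmetry of $(n_1,n_2)_0$ alone but because the entire argument goes through with the roles of $\gamma_1$ and $\gamma_2$ interchanged (both lie in $(-1,-1/2)$); the paper makes the same tacit assumption.
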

\begin{proof}
For the case $n_1=n_2=n$, choose $C=\sum_{p<n} (n-p)^{\gamma_1+\gamma_2}<\infty$ since $\gamma_1+\gamma_2<-1$. When $n_1\neq n_2$, suppose that $n_1< n_2$. Then
\begin{align*}
\sum_{p\in \mathbb{Z}} (n_1-p)_+^{\gamma_1} (n_2-p)_+^{\gamma_2}&=\sum_{p=1}^\infty p^{\gamma_1}(n_2-n_1+p)^\gamma_2\le \int_{0}^\infty x^{\gamma_1}(n_2-n_1+x)^{\gamma_2} dx\\& =(n_2-n_1)^{\gamma_1+\gamma_2+1}\int_{0}^\infty y^{\gamma_1}(1+y)^{\gamma_2} dy,
\end{align*}
where the integral converges.
\end{proof}

The following simple fact will be used.
\begin{Lem}\label{Lem：sum restriction}
Suppose that $\gamma_j<-1/2$ for all $j=1,\ldots,k$, $k\ge 2$, and $\gamma_1+\ldots+\gamma_k\ge -k/2-1/2$.  Then
\[
-\frac{r}{2}-\frac{1}{2}<\gamma_1+\ldots+\gamma_r<-\frac{r}{2}\quad \text{for all $r=1,\ldots,k-1$.}
\]
In addition, each $\gamma_j>-1$, $j=1,\ldots,k$.
\end{Lem}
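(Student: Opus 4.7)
The statement decomposes into three elementary inequalities, all of which follow from combining the two hypotheses $\gamma_j<-1/2$ for each $j$ and $\sum_{j=1}^k\gamma_j\ge -k/2-1/2$ with simple arithmetic. The plan is to address them in the order: upper bound on partial sums, lower bound on partial sums, and pointwise lower bound $\gamma_j>-1$.

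First, for the upper bound $\gamma_1+\ldots+\gamma_r<-r/2$, I would just add $r$ copies of the strict inequality $\gamma_j<-1/2$. Since $r\ge 1$ and all inequalities are strict, the sum is strictly less than $-r/2$. This step requires no use of the second hypothesis.

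For the lower bound $\gamma_1+\ldots+\gamma_r>-r/2-1/2$, I would write
\[
\gamma_1+\ldots+\gamma_r=(\gamma_1+\ldots+\gamma_k)-(\gamma_{r+1}+\ldots+\gamma_k),
\]
apply the hypothesis $\sum_{j=1}^k\gamma_j\ge -k/2-1/2$ to the first bracket, and use the upper bound just established (now applied to the $k-r\ge 1$ tail terms, which is permissible since $r\le k-1$) to conclude $\gamma_{r+1}+\ldots+\gamma_k<-(k-r)/2$. Combining these gives a strict inequality $\gamma_1+\ldots+\gamma_r>-k/2-1/2+(k-r)/2=-r/2-1/2$, as desired.

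Finally, for $\gamma_j>-1$, I would use the identity $\gamma_j=\sum_{i=1}^k\gamma_i-\sum_{i\ne j}\gamma_i$, apply the global lower bound $\sum_{i=1}^k\gamma_i\ge -k/2-1/2$, and use $\sum_{i\ne j}\gamma_i<-(k-1)/2$ (which follows from summing $k-1\ge 1$ copies of $\gamma_i<-1/2$) to obtain $\gamma_j>-k/2-1/2+(k-1)/2=-1$. None of the steps is delicate; the only thing to watch is that strictness of the inequalities is preserved, which is automatic because in each step at least one strict inequality (among $\gamma_j<-1/2$ for some $j$) is invoked, using the standing assumption $k\ge 2$ so that the tails are nonempty.
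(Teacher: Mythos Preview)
Your proof is correct and follows essentially the same approach as the paper's: the paper argues the lower bound and the pointwise bound by contradiction while you phrase them directly, but the underlying arithmetic (splitting the total sum into a partial sum and its complement, then invoking the strict upper bound on the complement) is identical.
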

\begin{proof}
The inequality $\gamma_1+\ldots+\gamma_r<-\frac{r}{2}$ is obvious. For the other inequality,
suppose that $\gamma_1+\ldots+\gamma_r\le -r/2-1/2$ for some $r\in \{1,\ldots,k\}$. Because $\gamma_{r+1},\ldots,\gamma_{k}<-1/2$, we get the contradiction: $\gamma_1+\ldots+\gamma_k<-r/2-1/2-(k-r)/2=-k/2-1/2$.

Then we show by contradiction  that each $\gamma_j>-1$. Suppose, e.g.,  $\gamma_k\le -1$. By what was just proved, one has
 $\gamma_1+\ldots+\gamma_{k-1}<-(k-1)/2$. Thus by adding $\gamma_k\le -1$, one gets $\gamma_1+\ldots+\gamma_{k}<-k/2-1/2$, which contradicts the assumption.
\end{proof}

We need the following lemma, which  is a consequence of Corollary 1.1 (b) of \citet{terrin:taqqu:1991:power}.
\begin{Lem}\label{Lem:bound power circular}
If $\alpha_1,\ldots,\alpha_m$, $m\ge 2$, satisfy
\begin{equation}\label{eq:alpha exponent condition}
\alpha_1,\ldots,\alpha_n>-1, \quad \sum_{i=1}^m \alpha_i +m>1,
\end{equation}
then for any $c>0$
\[
\int_{[0,c]^m}  |x_1-x_2|^{\alpha_1}|x_2-x_3|^{\alpha_2}\ldots |x_{m-1}-x_m|^{\alpha_{m-1}}|x_m-x_1|^{\alpha_m}     dx_1\ldots dx_m < \infty.
\]
\end{Lem}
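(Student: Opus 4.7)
The plan is to invoke the power-counting theorem of \citet{terrin:taqqu:1991:power} (Corollary 1.1(b)), which governs the convergence of integrals whose integrand is a product of powers of differences arranged in a cycle. Verifying its hypotheses reduces to two scales of power counting: the local condition at each partial diagonal $\{x_i = x_{i+1}\}$ gives $\alpha_i + 1 > 0$, ensuring each single factor is locally integrable along a codimension-one diagonal, while the global condition at the full diagonal $\{x_1 = \cdots = x_m\}$, which has codimension $m-1$ in $\mathbb{R}^m$, gives $\sum_i \alpha_i + (m-1) > 0$, that is $\sum_i \alpha_i + m > 1$. These match the hypotheses of the lemma precisely, so the cleanest proof is to quote Corollary~1.1(b) and verify the two conditions.

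For a self-contained proof one may proceed by induction on $m$. The base case $m=2$ collapses to $\int_{[0,c]^2} |x_1-x_2|^{\alpha_1+\alpha_2} dx_1 dx_2$, which is finite iff $\alpha_1+\alpha_2 > -1$, matching $\alpha_1+\alpha_2+2 > 1$. For the inductive step I would integrate out one variable, say $x_1$, using the continuous analogue of Lemma \ref{Lem:bound diff}:
\[
\int_0^c |x_m-x_1|^{\alpha_m} |x_1-x_2|^{\alpha_1} dx_1 \le C\bigl(|x_2-x_m|^{\alpha_1+\alpha_m+1} \vee 1\bigr),
\]
valid whenever $\alpha_1, \alpha_m > -1$ and $\alpha_1 + \alpha_m \ne -1$. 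This reduces the $m$-fold integral to either a circular integral of length $m-1$ whose new edge $(x_m, x_2)$ carries the merged exponent $\alpha_1 + \alpha_m + 1$, or to a chain integral of length $m-1$ handled by iterated Fubini using each $\alpha_i > -1$. One then checks that $\alpha_1 + \alpha_m + 1 > -1$ (from $\alpha_1, \alpha_m > -1$) and that $\sum_{i=2}^{m-1}\alpha_i + (\alpha_1+\alpha_m+1) + (m-1) = \sum_{i=1}^m \alpha_i + m > 1$, so the hypotheses are preserved and the induction closes.

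The main obstacle in the direct induction is the borderline case $\alpha_1 + \alpha_m = -1$, where the one-dimensional estimate picks up a logarithmic factor $\log|x_2-x_m|$ that has to be carried through the remaining induction. This can be absorbed either by degrading the bound to $C(|x_2-x_m|^{-1+\eta} + 1)$ for an arbitrarily small $\eta > 0$ and re-running the induction with the slightly shifted sum condition, or by noting that $\log|x_2 - x_m|$ is integrable against any power $|x_2-x_m|^{\beta}$ with $\beta > -1$. Because the Terrin-Taqqu power-counting criterion handles all critical exponents uniformly, the preferable route is simply to cite their Corollary 1.1(b).
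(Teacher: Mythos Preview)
Your proposal is correct, and your primary route---invoking Corollary~1.1(b) of \citet{terrin:taqqu:1991:power}---is exactly what the paper does; indeed the paper's entire ``proof'' is the one-line citation, without even the verification of hypotheses that you supply. Your self-contained inductive sketch is additional material not present in the paper.
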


We need also the following hypercontractivity inequality for proving tightness in $D[0,1]$ (Proposition 5.2 of \citet{bai:taqqu:2013:3-generalized})
\begin{Lem}\label{Lem:Hypercontract}
Suppose that $h\in \ell^2(\mathbb{Z}^k)$ vanishing on the diagonals.
Let $X=\sum_{\mathbf{i}\in \mathbb{Z}^k}h(\mathbf{i}) \prod_{p=1}^k\epsilon_{i_p}$, $k\ge 1$.
If for some $p'>p>2$, $\E |\epsilon_i|^{p'}<\infty$, then one has for some constant $c_{p,k}>0$ which does not depend on $h$ that
\begin{equation*}
\E [|X|^p]^{1/p} \le c_{p,k} \E [|X|^2]^{1/2}.
\end{equation*}
\end{Lem}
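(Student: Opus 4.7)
The plan is to prove Lemma \ref{Lem:Hypercontract} by induction on the order $k$, combined with a classical decoupling inequality, which applies because the coefficient $h$ vanishes on the diagonals. The base case $k=1$ reduces to a Rosenthal-type bound for sums of independent centered random variables, and the inductive step reduces the order by one via conditioning.

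For $k=1$, one has $X = \sum_i h(i)\epsilon_i$, a sum of independent centered random variables with $\E(h(i)\epsilon_i)^2 = h(i)^2$. Rosenthal's inequality gives, for any $p>2$,
\[
\|X\|_p \le C_p \max\Bigl(\bigl(\sum_i |h(i)|^p\, \E|\epsilon_i|^p\bigr)^{1/p},\ \bigl(\sum_i h(i)^2\bigr)^{1/2}\Bigr).
\]
Since $p<p'$ we have $\sup_i \E|\epsilon_i|^p<\infty$, and since $p\ge 2$ the discrete $\ell^p$ norm of $h$ is dominated by its $\ell^2$ norm; both terms are therefore controlled by $c_{p,1}\|X\|_2$.

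For the inductive step, I would invoke a standard decoupling inequality for multilinear forms vanishing on the diagonals (as in de la Pe\~{n}a and Gin\'{e}) to obtain $\|X\|_p \le D_{p,k}\|X^*\|_p$, where $X^* := \sum_{\mathbf{i}} h(\mathbf{i})\, \epsilon^{(1)}_{i_1}\cdots\epsilon^{(k)}_{i_k}$ with $\epsilon^{(1)},\ldots,\epsilon^{(k)}$ independent copies of $(\epsilon_i)$. Conditioning on $\epsilon^{(2)},\ldots,\epsilon^{(k)}$, the form $X^*$ is linear in $\epsilon^{(1)}$ with coefficients $H(i_1) := \sum_{i_2,\ldots,i_k} h(\mathbf{i})\prod_{l=2}^k \epsilon^{(l)}_{i_l}$, so the base case applied conditionally yields $\E[|X^*|^p \mid \epsilon^{(2)},\ldots,\epsilon^{(k)}] \le c_{p,1}^p\bigl(\sum_{i_1} H(i_1)^2\bigr)^{p/2}$. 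Taking outer expectation, then Minkowski's inequality in $L^{p/2}$ (valid since $p\ge 2$) to pull $\sum_{i_1}$ outside, and finally the inductive hypothesis applied to each $H(i_1)$ (which still vanishes on the diagonals in the remaining coordinates), one obtains
\[
\|X^*\|_p^2 \le c_{p,1}^2\, c_{p,k-1}^2 \sum_{i_1}\|H(i_1)\|_2^2 = c_{p,1}^2\, c_{p,k-1}^2\, \|X^*\|_2^2,
\]
the last equality by the orthogonality of the decoupled sum. Since $\|X^*\|_2$ and $\|X\|_2$ coincide up to a factor $\sqrt{k!}$, the bound $\|X\|_p\le c_{p,k}\|X\|_2$ follows.

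The main obstacle is keeping the constant independent of $h$ through the induction: the decoupling constant $D_{p,k}$ is universal but must be tracked, and the Minkowski interchange composes cleanly only because $p\ge 2$. Since the lemma is cited as Proposition 5.2 of \citet{bai:taqqu:2013:3-generalized}, where essentially the same argument is carried out in detail, one may alternatively simply defer to that reference.
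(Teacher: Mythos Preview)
The paper does not actually prove this lemma; it is stated with a parenthetical citation to Proposition~5.2 of \citet{bai:taqqu:2013:3-generalized} and then used directly in the tightness argument. So there is nothing in the paper to compare your argument against beyond the citation itself, which you already acknowledge.

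Your sketch via Rosenthal for $k=1$, decoupling, and an inductive conditioning/Minkowski step is a standard and essentially correct route to such moment comparisons. One small point of presentation: the inductive hypothesis you invoke is the lemma for single-sequence off-diagonal forms, but $H(i_1)$ is a $(k-1)$-linear form in the \emph{decoupled} sequences $\epsilon^{(2)},\ldots,\epsilon^{(k)}$, so the hypothesis does not literally apply to it. The clean fix is to run the induction on decoupled forms directly (where no further decoupling is needed and the off-diagonal condition is automatic), obtain $\|X^*\|_p\le c\,\|X^*\|_2$, and then use the decoupling inequality and the identity $\|X\|_2^2=k!\,\|X^*\|_2^2$ once at the end. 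With that adjustment the argument is complete and the constant visibly depends only on $p$ and $k$; alternatively, as you note, simply deferring to the cited reference matches exactly what the paper does.
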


\begin{proof}[Proof of Theorem \ref{Thm:boundary k>=2}]
Let $C>0$ be a constant whose value can change from line to line.
We first show that the finite-dimensional distributions of $Y_N(t)$ converges to those of $\sigma B(t)$ using  Theorem \ref{Thm:practical}. First,
the convergence of the covariance structure of $Y_N(t)$ to that of $\sigma B(t)$ follows from Lemma \ref{Lem:var} the fact that for $s\le t$ we have
\[
\E Y_N(t)Y_N(s)=\frac{1}{2}\left[\E Y_N(t)^2 +\E Y_N(s)^2- \E(Y_N(t)-Y_N(s))^2\right]\sim \frac{1}{2}\left[\E Y_N(t)^2 +\E Y_N(s)^2- \E Y_N(t-s)^2\right]
\]
as $N\rightarrow\infty$,
since $X(n)$ is stationary. We now check the contraction conditions (\ref{eq:contract cond clt}).
For simplicity we set $L(\cdot)=1$ and $t=1$.
We can write
\[
Y_N(1)=\sum_{-\infty<i_1,\ldots,i_k<+\infty} ~ f_N(i_1,\ldots,i_k)  ~ \epsilon_{i_1}\ldots \epsilon_{i_k}
\]
where
\begin{equation}\label{eq:f_N}
f_N(i_1,\ldots,i_k)=\frac{1}{\sqrt{N\ln N}}\sum_{n=1}^{N} g\left(n-i_1,\ldots, n-i_k\right) 1_{D^c\cap \{i_1<n,\ldots,i_k<n\}}.
\end{equation}

To simplify notation, we set
\[\mathbf{p}=(p_1,\ldots,p_r), \qquad \mathbf{q}=(q_1,\ldots,q_{k-r}),
\]
\[\mathbf{i}_1=(i_1,\ldots,i_{k-r}),\quad \mathbf{i}_2=(i_{k-r+1},\ldots,i_{2k-2r}),\quad \mathbf{i}=(\mathbf{i}_1,\mathbf{i}_2),
\]
and let $\mathbf{1}$ stand for a vector of $1$'s of suitable dimension. We also use the convention that $\mathbf{x}^\mathbf{a}=x_1^{a_1}\ldots x_n^{a_n}$ if $\mathbf{x}=(x_1,\ldots,x_n)$ and $\mathbf{a}=(a_1,\ldots,a_n)$. Let $(\Sigma\mathbf{x})=x_1+\ldots+x_n$ if $\mathbf{x}=(x_1,\ldots,x_n)$.

Set  $g^*(\cdot)$ be as in Definition \ref{Def:GHK L} which we write by splitting
$\mathbf{x}=(\mathbf{x}_1,\mathbf{x}_2)$, where $\mathbf{x}_1\in \mathbb{R}_+^{r}$ and $\mathbf{x}_2\in \mathbb{R}_+^{k-r}$:
\begin{equation}\label{eq:g*(x1,x2)}
g^*(\mathbf{x}_1,\mathbf{x}_2)=c\sum_{j=1}^m    \mathbf{x}_1^{\mathbd{\beta}_j} \mathbf{x}_2^{\mathbd{\eta}_j}, \quad \mathbd{\beta}_j=(\gamma_{j1},\ldots,\gamma_{jr}),\quad \mathbd{\eta}_j=(\gamma_{j,r+1},\ldots,\gamma_{jk}),
\end{equation}
so that
\begin{equation}\label{eq:sum beta eta}
\sum_{i=1}^r \beta_{ji}+\sum_{i=1}^{k-r}\eta_{ji}=\sum_{i=1}^k \gamma_{ji}=\alpha,
\end{equation}
which we write simply as $\sum \mathbd{\beta}+\sum \mathbd{\eta}=\sum \mathbd{\gamma}=\alpha$.
 For convenience, if some component $x_j$ of $\mathbf{x}$ is negative, we set  $\mathbf{x}^a=0$ and hence $g^*(\mathbf{x})=0$.  Then in view of (\ref{eq:f_N}), (\ref{eq:contraction def}) and (\ref{eq:g bound g*}),
\begin{align*}
|(f_N\star_r f_N) (\mathbf{i})|&\le \frac{1}{N\ln N} \sum_{n_1,n_2=1}^{N} \sum_{\mathbf{p}} g^*(n_1\mathbf{1}-\mathbf{p},n_1\mathbf{1}-\mathbf{i}_1) g^*(n_2\mathbf{1}-\mathbf{p},n_2\mathbf{1}-\mathbf{i}_2)\\
&= \frac{c^2}{N\ln N} \sum_{n_1,n_2=1}^{N} \sum_{j_1,j_2=1}^m (n_1\mathbf{1}-\mathbf{i}_1)^{\mathbd{\eta}_{j_1}}(n_2\mathbf{1}-\mathbf{i}_2)^{\mathbd{\eta}_{j_2}}   \sum_{\mathbf{p}}  (n_1\mathbf{1}-\mathbf{p})^{\mathbd{\beta}_{j_1}}(n_2\mathbf{1}-\mathbf{p})^{\mathbd{\beta}_{j_2}},
\end{align*}
by using (\ref{eq:g*(x1,x2)}).
By Lemma \ref{Lem:bound diff}, we have for the last sum,
\[
\sum_{\mathbf{p}}  (n_1\mathbf{1}-\mathbf{p})^{\mathbd{\beta}_{j_1}}(n_2\mathbf{1}-\mathbf{p})^{\mathbd{\beta}_{j_2}}=\sum_{p_1,\ldots,p_r} \prod_{u=1}^r (n_1-p_u)^{\gamma_{j_1,u}} \prod_{v=1}^r (n_1-p_v)^{\gamma_{j_2,v}} \le C (n_1,n_2)_0^{(\Sigma\mathbd{\beta}_{j_1})+(\Sigma\mathbd{\beta}_{j_2})+r}.
\]
Hence
\begin{align}
\|f_N\star_r f_N \|_{2k-2r}^2&=
\sum_{\mathbf{i}}\left[(f_N\star_r f_N) (\mathbf{i})\right]^2\notag\\
 &\le
 \frac{C}{N^2(\ln N)^2} \sum_{\mathbf{i}} \left(\sum_{n_1,n_2=1}^{N} \sum_{j_1,j_2=1}^m(n_1,n_2)_0^{(\Sigma\mathbd{\beta}_{j_1})+(\Sigma\mathbd{\beta}_{j_2})+r} (n_1\mathbf{1}-\mathbf{i}_1)^{\mathbd{\eta}_{j_1}}(n_2\mathbf{1}-\mathbf{i}_2)^{\mathbd{\eta}_{j_2}}   \right)^2 \notag \\
 &=  \frac{C}{N^2(\ln N)^2}\sum_{j_1,j_2,j_3,j_4=1}^m \sum_{n_1,n_2,n_3,n_4=1}^{N} (n_1,n_2)_0^{(\Sigma\mathbd{\beta}_{j_1})+(\Sigma\mathbd{\beta}_{j_2})+r} (n_3,n_4)_0^{(\Sigma\mathbd{\beta}_{j_3})+(\Sigma\mathbd{\beta}_{j_4})+r} \notag \\& \qquad \times \sum_{\mathbf{i}_1} (n_1\mathbf{1}-\mathbf{i}_1)^{\mathbd{\eta}_{j_1}}(n_3\mathbf{1}-\mathbf{i}_1)^{\mathbd{\eta}_{j_3}} \sum_{\mathbf{i}_2}(n_2\mathbf{1}-\mathbf{i}_2)^{\mathbd{\eta}_{j_2}}(n_4\mathbf{1}-\mathbf{i}_2)^{\mathbd{\eta}_{j_4}} \notag \\
 &\le \frac{C}{N^2(\ln N)^2}\sum_{j_1,j_2,j_3,j_4=1}^m \sum_{n_1,n_2,n_3,n_4=1}^{N} (n_1,n_2)_0^{(\Sigma\mathbd{\beta}_{j_1})+(\Sigma\mathbd{\beta}_{j_2})+r} (n_3,n_4)_0^{(\Sigma\mathbd{\beta}_{j_3})+(\Sigma\mathbd{\beta}_{j_4})+r} \notag\\& \qquad\times
(n_1,n_3)_0^{(\Sigma\mathbd{\eta}_{j_1})+(\Sigma\mathbd{\eta}_{j_3})+k-r} (n_2,n_4)_0^{(\Sigma\mathbd{\eta}_{j_2})+(\Sigma\mathbd{\eta}_{j_4})+k-r}  \label{eq:f_N contract bound}
\end{align}
where we have applied again Lemma \ref{Lem:bound diff} to get the last inequality.
Note that  if one adds up the power exponents in the last expression, one gets
\begin{equation}\label{eq:sum of expo}
(\Sigma\mathbd{\beta}_{j_1})+(\Sigma\mathbd{\eta}_{j_1})+(\Sigma\mathbd{\beta}_{j_2})+(\Sigma\mathbd{\eta}_{j_2})+(\Sigma\mathbd{\beta}_{j_3})+(\Sigma\mathbd{\eta}_{j_3})+(\Sigma\mathbd{\beta}_{j_4})+(\Sigma\mathbd{\eta}_{j_4})+2k=4\alpha+2k=-2,
\end{equation}
by (\ref{eq:sum beta eta}), where the last equality of (\ref{eq:sum of expo}) is due to assumption (\ref{eq:boundary alpha}).

Note also that by Lemma \ref{Lem：sum restriction}, we have for $r\in\{1,\ldots,k-1\}$ that
\[
-\frac{r}{2}-\frac{1}{2}<(\Sigma\mathbd{\beta}_{j_1}),(\Sigma\mathbd{\beta}_{j_2}),(\Sigma\mathbd{\beta}_{j_3}),(\Sigma\mathbd{\beta}_{j_4})<-\frac{r}{2},
\]
and
\[
 -\frac{k-r}{2}-\frac{1}{2}<
(\Sigma\mathbd{\eta}_{j_1}),(\Sigma\mathbd{\eta}_{j_3}),(\Sigma\mathbd{\eta}_{j_2}),(\Sigma\mathbd{\eta}_{j_4})<-\frac{k-r}{2}.
\]
Let $\alpha_1=(\sum \mathbb{\beta}_{j_1})+(\sum \mathbb{\beta}_{j_2})+r$ be the exponent of $(n_1,n_2)_0$ in (\ref{eq:f_N contract bound}). Then
\[
-1=-r/2-1/2-r/2-1/2+r<\alpha_1<-r/2-r/2+r=-r+r=0.
\]
Define similarly $\alpha_2,\alpha_3,\alpha_4$ for the other exponents in (\ref{eq:f_N contract bound}), which all lie strictly between $-1$ and $0$.
Hence,
the convergence
\begin{equation}\label{eq:contract in proof}
\lim_{N\rightarrow\infty}\|f_N\star_r f_N \|_{2k-2r}^2=0, \quad r=1,\ldots,k-1,
\end{equation}
will follow if one shows that
\begin{equation}\label{eq:goal bounded}
\sup_N~
N^{-2} \sum_{n_1,n_2,n_3,n_4=1}^{N} (n_1,n_2)_0^{\alpha_1} (n_2,n_3)_0^{\alpha_2}
(n_3,n_4)_0^{\alpha_3} (n_4,n_1)_0^{\alpha_4}<\infty,
\end{equation}
where by (\ref{eq:sum of expo})
\begin{equation}\label{eq:alpha j cond}
-1<\alpha_j<0,~j=1,\ldots 4, \quad \alpha_1+\alpha_2+\alpha_3+\alpha_4=-2.
\end{equation}

Let's consider first the sum in (\ref{eq:goal bounded}) over only distinct $n_1,\ldots,n_4$ (we use the prime $'$ to indicate that the sum does not include the diagonals).
In this case,
\begin{align*}
&\sum_{1\le n_1,n_2,n_3,n_4\le N}' \left|\frac{n_1}{N}-\frac{n_2}{N}\right|^{\alpha_1} \left|\frac{n_2}{N}-\frac{n_3}{N}\right|^{\alpha_2}
\left|\frac{n_3}{N}-\frac{n_4}{N}\right|^{\alpha_3} \left|\frac{n_4}{N}-\frac{n_1}{N}\right|^{\alpha_4} N^{-4}
=\int\left|\frac{[Nx_1]-[Nx_2]}{N}\right|^{\alpha_1}\times \\& \left|\frac{[Nx_2]-[Nx_3]}{N}\right|^{\alpha_2}
\left|\frac{[Nx_3]-[Nx_4]}{N}\right|^{\alpha_3} \left|\frac{[Nx_4]-[Nx_1]}{N}\right|^{\alpha_4} I\{N^{-1}\le x_i\le 1+N^{-1},~[Nx_i]\neq [Nx_j], \forall i\neq j\}d\mathbf{x}.
\end{align*}
Note that for any $x,y>0$, one has that $|[Nx]-[Ny]|=n$ implies that $|Nx-Ny|\le n+1$ which  implies $|x-y|\le (n+1)/N$, for $n\ge 0$. Then
since each $\alpha<0$, we get
\begin{align*}
&\sup_N\left|\frac{[Nx]-[Ny]}{N}\right|^{\alpha} |x-y|^{-\alpha}I\{[Nx]\neq [Ny]\}\le
\sup_{|[Nx]-[Ny]|=n, n\in \mathbb{Z}_+} \left(\frac{n}{N}\right)^{\alpha} \left(\frac{n+1}{N}\right)^{-\alpha}= \sup_{n\in \mathbb{Z}_+} \left(\frac{n+1}{n}\right)^{-\alpha} = 2^{-\alpha}.
\end{align*}
Hence the the sum in (\ref{eq:goal bounded}) over distinct $n_1,\ldots,n_4$ is bounded by
\[
C\int_{[0,2]^4}  |x_1-x_2|^{\alpha_1}|x_2-x_3|^{\alpha_2} |x_{3}-x_4|^{\alpha_{3}}|x_4-x_1|^{\alpha_4}     dx_1dx_2dx_3 dx_4 ,
\]
which is finite due to Lemma \ref{Lem:bound power circular}.

Consider now the the sum in (\ref{eq:goal bounded}) over $n_1,\ldots,n_4$ with only  three of them distinct. Let, for example, $n_1=n_4$, and we need to show that the following
\begin{align*}
&\sup_{N} N^{-2}\sum_{1\le n_1,n_2,n_3\le N}' |n_1-n_2|^{\alpha_1}|n_2-n_3|^{\alpha_2}
|n_3-n_1|^{\alpha_3}=\\
& \sup_N N^{1+\alpha_1+\alpha_2+\alpha_3}\sum_{1\le n_1,n_2,n_3\le N}' \left|\frac{n_1}{N}-\frac{n_2}{N}\right|^{\alpha_1}\left|\frac{n_2}{N}-\frac{n_3}{N}\right|^{\alpha_2}
\left|\frac{n_3}{N}-\frac{n_1}{N}\right|^{\alpha_3} N^{-3}<\infty.
\end{align*}
Note that (\ref{eq:alpha j cond}) entails that $-2<\alpha_1+\alpha_2+\alpha_3<-1$. Then $ N^{1+\alpha_1+\alpha_2+\alpha_3}\rightarrow 0$ as $N\rightarrow\infty$, and  the boundedness of the multiple sum can be established similarly as above using integral approximation and Lemma \ref{Lem:bound power circular}.

If the sum in (\ref{eq:goal bounded}) is over $n_1,\ldots,n_4$ with only two or less of them distinct, the boundedness is easily established through bounding all the summands by one constant,  because we have the factor $N^{-2}$.

So (\ref{eq:goal bounded}) holds and thus (\ref{eq:contract in proof}) holds, and the convergence of finite-dimensional distributions is proved.

Now we show tightness.  By Lemma \ref{Lem:Hypercontract}, one can choose $p\in(2,3)$, so that by Lemma \ref{Lem:var} if $0<s<t<1$, one has for $N$ large enough,
\begin{align*}
\E |Y_N(t)-Y_N(s)|^p \le&  C [\E |Y_N(t)-Y_N(s)|^2]^{p/2}\le C\left[\frac{[Nt]-[Ns]}{N} \cdot \frac{\ln([Nt]-[Ns])}{\ln N} \right]^{p/2}
\\\le& C  \left[\frac{[Nt]-[Ns]}{N}\right]^{p/2-\delta},
\end{align*}
where $\delta>0$ is small enough so that $p/2-\delta>1$. The last inequality is true because $\ln x$ is slowly varying as $x\rightarrow\infty$ and so one applies the Potter's bound (see e.g., equation (2.3.6) of \citet{giraitis:koul:surgailis:2009:large}).
Note that $F_N(t):=[Nt]/N$ is a non-decreasing right continuous function on $[0,1]$  and that $F_N$ converges uniformly to $F(t):=t$ as $N\rightarrow\infty$. Hence by Lemma 4.4.1 and Theorem 4.4.1 of \citet{giraitis:koul:surgailis:2009:large}, the tightness in $D[0,1]$ is proved.
\end{proof}

\subsection{Proof of Theorem \ref{Thm:linear}}\label{Sec:proof linear}
\begin{proof}
Set for simplicity $L(n)=c$.
The covariance $\gamma(n)$ for $n>0$ is
\begin{align*}
\gamma(n)=\E X(n)X(0)=\sum_{i=1}^\infty a_{i+n}a_i
=  c^2 \sum_{i=1}^\infty (i+n)^{-1}i^{-1}.
\end{align*}
Note that as $n\rightarrow\infty$,
\[
\sum_{i=2}^\infty (i+n)^{-1}i^{-1}=n^{-1} \sum_{i=2}^\infty \left(\frac{i}{n}+1\right)^{-1} \left(\frac{i}{n}\right)^{-1} \frac{1}{n}= n^{-1}\int_{2/n}^\infty \left(\frac{[nx]}{n}+1\right)^{-1} \left(\frac{[nx]}{n}\right)^{-1 }dx\sim n^{-1}\ln n.
\]
The last asymptotic can be seen from:
\[
\int_{2/n}^\infty \left(x+1\right)^{-1} x^{-1}dx \le \int_{2/n}^\infty \left(\frac{[nx]}{n}+1\right)^{-1} \left(\frac{[nx]}{n}\right)^{-1 }dx \le \int_{1/n}^\infty \left(y+1\right)^{-1} y^{-1}dy,
\]
where we have used the fact $x-1/n\le [nx]/n\le x$, and
 both the lower and upper bounds are asymptotically equivalent to $\ln n$ as $n\rightarrow\infty$.

Hence
\begin{equation}\label{eq:asymp gamma linear}
\gamma(n)\sim c^2 n^{-1}\ln n \quad\text{as $n\rightarrow\infty$.}
\end{equation}
So as $N\rightarrow \infty$, one has
\begin{align}\label{eq:allan var linear}
\E \left(\sum_{n=1}^N X(n)\right)^2&=N \sum_{n=-N+1}^{N-1}\gamma(n)- \sum_{n=-N+1}^{N-1} |n|\gamma(n)\notag\\&\sim 2c^2 N\sum_{n=1}^{N}n^{-1}\ln n\sim 2c^2 N \int_1^N x^{-1}\ln x dx \sim 2c^2 N(\ln N)^2.
\end{align}
Note that by (\ref{eq:asymp gamma linear}) the term $ \sum_{n=-N+1}^{N-1}|n|\gamma(n)= O(N\ln N)$ and is thus negligible. Having obtained the  asymptotic variance (\ref{eq:allan var linear}), the proof is then concluded by applying
\citet{davydov:1970:invariance} Theorem 2 (though this theorem was stated for a linearly interpolated version of $Y_N(t)$ in the space $C[0,1]$, it is straightforward to adapt the the proof, which consists of showing convergence of finite-dimensional distributions and establishing tightness by moment estimate, to establish convergence in $D[0,1]$ with the uniform metric.)
\end{proof}

\begin{Rem}
One may wonder if it is possible to get a different normalization in the nonlinear case in Theorem \ref{Thm:boundary k>=2}, since the normalization  in the linear case in Theorem \ref{Thm:linear} has an extra $\sqrt{\ln N}$ factor. This is not possible under our setting where the kernel $g$ is homogeneous with exponent $\alpha$ and  is bounded by a linear combination of products of \emph{purely} power functions $x_1^{\gamma_1}\ldots x_k^{\gamma_k}$,  where each $\gamma_j<-1/2$ and $\gamma_1+\ldots+\gamma_k=\alpha$.

Indeed, if one wanted to get some extra logarithmic factor in the covariance $\gamma(n)$, one would set for example $g(x_1,\ldots,x_k)=x_1^{\gamma_1}\ldots x_k^{\gamma_k}$ with $\gamma_k=-1$. But this will not achieve the stated goal. Indeed, by Lemma \ref{Lem：sum restriction}, using contradiction,  we have $\alpha=\gamma_1+\ldots+\gamma_k<-k/2-1/2$, which falls into the short memory regime (see Remark \ref{Rem:short memory exponent}) and thus the normalization is $\sqrt{N}$ as in (\ref{eq:clt}).
\end{Rem}

\noindent\textbf{Acknowledgments.} This work was partially supported by the NSF grant  DMS-1309009 at Boston University. We would also like to thank the referee for his comments.

\bibliographystyle{plainnat}

\end{document}